\theoremstyle{plain}
\newtheorem{theorem} {Theorem} [section]
\newtheorem{lemma} [theorem] {Lemma}
\newtheorem{corollary}[theorem] {Corollary}
\newtheorem{fact} [theorem] {Fact}
\newtheorem{proposition} [theorem] {Proposition}
\newtheorem{claim} [theorem] {Claim}
\newtheorem*{claim*} {Claim}
\newtheorem{conjecture}[theorem] {Conjecture}
\newtheorem{question}[theorem] {Question}
\theoremstyle{definition}
\newtheorem{definition}[theorem] {Definition}
\let\n\noindent%
\newcommand{\iso}[1][]{\buildrel {#1} \over \cong}
\xdef\csname b\x\endcsname{\noexpand\ensuremath{\noexpand\mathbf{\x}}}
\xdef\csname bb\x\endcsname{\noexpand\ensuremath{\noexpand\mathbb{\x}}}
\xdef\csname c\x\endcsname{\noexpand\ensuremath{\noexpand\mathcal{\x}}}
\xdef\csname s\x\endcsname{\noexpand\ensuremath{\noexpand\mathscr{\x}}}
\def\H{\ifmmode{\mathbb H}\else\accent"07D\fi} 
\renewcommand{\PH}{\cS_{H}}
\newcommand{\unary}{{c}}
\newcommand{\br}{\mathrm{br}}
\newcommand{\pr}{\mathrm{pr}}
\newcommand{\redSub}[1]{#1^R}
\newcommand{\blueSub}[1]{#1^B}
\newcommand{\pleq}{\leq_{\rm poly}}
\newcommand{\peq}{=_{\rm poly}}
\newclass{\Hom}{Hom}
\newclass{\lHom}{LHom}
\newclass{\lsHom}{LSwHom}
\newclass{\sHom}{SwHom}
\newclass{\NAESAT}{NaeSAT}
\newcommand\lpHom[1]{\lHom(\cS_{#1})}
\newclass{\NU}{NU}
\mathchardef\mhyphen="2D
\newcommand{\kNU}[1]{#1\mhyphen\NU}
\newclass{\WNU}{WNU}
\newcommand{\fp}{f_p}
\newcommand{\ps}{p_s}
\newcommand{\sm}{f_s}
\newclass{\CNU}{CNU}
\newclass{\CSP}{CSP}
\DeclareMathOperator{\maj}{maj}
\DeclareMathOperator{\mCP}{CP}
\newcommand{\CPC}[1]{\mCP(\Bip{H}^{#1})}
\DeclareMathOperator{\rc}{rc}
\newcommand{\Bip}[1]{\cB_{#1}}
\tikzset{
 blackvertex/.style={circle, draw=black!100,fill=black!100,thick, inner sep=0pt, minimum size=2mm},
 cvert/.style={circle, draw=black!100,fill=none,thick, inner sep=2pt, minimum size=2mm},
 empty/.style={draw=none, fill=none}
}
\tikzset{decoration={snake,amplitude=.4mm,segment length=2mm,
 post length=0mm,pre length=0mm}} 
 \newcommand{\Verts}[2][v]{\foreach \i/\x/\y in {#2}{\draw (\x,\y) node (#1\i){};}}
 \newcommand{\Edges}[3][black]{\foreach \i/\j in {#2}{\draw[#1] (\i) edge[#3] (\j);}}
 \newcommand{\Vlabel}[4][.3cm]{\draw node[empty, #3 of = #2, node distance = #1] () {$#4$};}
 \newcommand{\Vlabels}[1]{\foreach \lab/\pos/\vert in {#1}{\Vlabel{\vert}{\pos}{\lab}}}
\begin{document}
 \thispagestyle{empty}
 \title[Towards a dichotomy for list switch homomorphism]{Towards a dichotomy for the list switch homomorphism problem for signed graphs}
 \author{Hyobin Kim, Mark Siggers}
 \address{Kyungpook National University Mathematics Department,
80 Dae-hak-ro, Daegu Buk-gu, South Korea, 41566}
 \email{hbkim1029@knu.ac.kr, mhsiggers@knu.ac.kr}
\thanks{The first author is supported by the KNU BK21 project. The second author is supported by Korean NRF Basic Science Research Program (2018-R1D1A1A09083741) funded by the Korean government (MEST), and the Kyungpook National University Research Fund}
\keywords{Signed Graph, Edge Coloured Graph, Homomorphism Complexity, Switching, List Colouring}
\subjclass[2020]{05C15,05C85}
\begin{abstract}
 We make advances towards a structural characterisation of the signed graphs $H$ for which the list switch $H$-colouring problem $\operatorname{LSwHom}(H)$ can be solved in polynomial time.  We conjecture a characterisation in the case that the graph $H$ can be switched to a graph in which every negative edge is also positive, and prove the characterisation in the case that the signed graph is reflexive. 
\end{abstract}
 \maketitle

 \section{Introduction}\label{sect:intro} 

The $\CSP$-dichotomy of Bulatov \cite{Bu17} and Zhuk \cite{Zh17} tells us that the constraint satisfaction problem $\CSP(\cH)$ for a core relational structure $\cH$ is polynomial time solvable, or in $\P$, if $\cH$ admits a $\WNU$-polymorphism, and is otherwise $\NP$-complete, or in $\NPC$. It is difficult, however, to decide if $\cH$ admits a $\WNU$-polymorphism. So more tractable dichotomies, characterisations like the well known $H$-colouring dichotomy of \cite{HN90} which says that the homomorphism problem $\Hom(H)$ for a simple graph $H$ is in $\P$ if and only if $H$ is bipartite, are still sought. One such dichotomy arose recently in relation to signed graphs \cite{BFHN,BS}, and in this paper we work towards a generalisation of this. The definition of a $\WNU$-polymorphisms, as well as the definitions of many other now standard algebraic terms introduced in this section without definition, can be found in Section \ref{sect:back}. 

 Henceforth, {\em graphs} are undirected graphs in which loops-- edges of the form $\{v,v\}$-- are allowed. If all vertices have loops then the graph is {\em reflexive}, and if no vertices have loops then it is {\em irreflexive}. We denote an edge $\{u,v\}$ of a graph simply as $uv$, and write $u\sim v$ to mean that $uv$ is an edge.

 A {\em signed graph} is a graph $G$ together with an assignment of a sign $+$ or $-$ to each edge. Introduced by Harary in \cite{Harary} in 1955, there are numerous results about signed graphs. For our purposes, it will be enough to view a signed graph $G$ simply as a {\em $\br$-graph} (for blue-red): a pair of graphs-- $\blueSub{G}$, whose edges are called {\em blue edges}, and $\redSub{G}$, whose edges are {\em red edges}-- on the same vertex set. Signed graphs, as defined by Harary, were irreflexive and simple. But as is done in \cite{BFHN,BS} we allow loops, and allow $\blueSub{G} \cap \redSub{G}$ to be non-empty. An edge in $\blueSub{G} \cap \redSub{G}$ is technically two edges in $G$, one of each colour, but it is convenient to consider it a single edge with both colours. As such, we refer to such an edge as a {\em purple edge}. If all edges of a $\br$-graph $G$ are red (or purple) we call $G$ {\em red} (or {\em purple}). If an edge is red (or blue) but not purple, we call it {\em pure} red (or pure blue). A $\br$-graph $G$ is irreflexive, or bipartite, or connected or such, if the {\em underlying graph} $\redSub{G} \cup \blueSub{G}$ is. We write $u \sim v$ to mean that $uv$ is in $\redSub{G} \cup \blueSub{G}$. Many results about signed graphs deal with the operation of {\em switching}, introduced by Zaslavsky~\cite{Z82b}, in which, for a vertex $v$, one switches, with respect to $\redSub{G}$ and $\blueSub{G}$, the set of edges incident to $v$. As a loop at $v$ can be considered to be switched twice, loops are unchanged by a switching. Purple edges, being a red and a blue edge, can also be viewed as unchanged by a switching. Two $\br$-graphs are {\em switching-equivalent} if one can be changed to the other by a series of switchings. A $\br$-graph that is switching-equivalent to a graph with no pure blue edges is a {\em $\pr$-graph} (for purple-red). 
 
As a {\em homomorphism} $\phi: G \to H$ from a graph $G$ to a graph $H$ is a edge preserving vertex map, a {\em homomorphism} from a $\br$-graph $G$ to another $\br$-graph $H$ is a vertex map $\phi: G \to H$ which preserves both red edges and blue edges. Defined in \cite{NRS14}, a {\em switch-homomorphism} from $G$ to $H$ is a vertex map $\phi: G \to H$ such that for some graph $G'$ that is switching-equivalent to $G$, the map $\phi: G' \to H$ is a homomorphism. 

 In \cite{FN14}, Foucaud and Naserasr introduced the decision problem $\sHom(H)$ for a given $\br$-graph $H$: decide for a given $\br$-graph $G$ if there is a switch-homomorphism to $H$. They addressed the problem of classifying the complexity of $\sHom(H)$ in terms of $H$. A priori, $\sHom(H)$ is not a $\CSP$ problem, but in \cite{BFHN} the authors showed that it is polynomially equivalent to $\CSP(\PH)$ for a $\br$-graph $\PH$ called the {\em switching graph} of $H$, defined in Definition \ref{def:switchgraph}. From this, it follows by the CSP-dichotomy, that there is a complexity dichotomy. This 
 dichotomy was characterised in \cite{BFHN} and \cite{BS}.

The {\em switch-core} of a $\br$-graph $H$ is the unique minimal induced subgraph to which it admits a switch-homomorphism. A complexity dichotomy for the switch-homomorphism problem was conjectured in \cite{BFHN}, and completed in \cite{BS}. 

\begin{theorem}[\cite{BS}]\label{thm:switchhom} For a $\br$-graph $H$, the problem $\sHom(H)$ is in $\P$ if the switch-core of $H$ has at most two edges, and is otherwise in $\NPC$.
\end{theorem}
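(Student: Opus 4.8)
The plan is to reduce $\sHom(H)$, via the polynomial equivalence with $\CSP(\PH)$ established in \cite{BFHN}, to a purely algebraic question about the switching graph $\PH$, and then to settle that question by a case analysis on the switch-core of $H$. For the tractable direction, suppose the switch-core $H'$ of $H$ has at most two edges. Since $\sHom(H)$ and $\sHom(H')$ are polynomially equivalent (switch-homomorphism to $H$ factors through $H'$), it suffices to treat $H'$, and one checks directly that in each of the finitely many cases — the empty $\br$-graph, a single vertex with a loop of one of the allowed colours, a single (red, blue or purple) edge, two disjoint or adjacent edges, etc. — the switching graph $\cP_{H'}$ admits a $\WNU$ polymorphism, indeed something as strong as a majority or a near-unanimity operation, or else $\CSP(\cP_{H'})$ is visibly equivalent to $2$-colouring or to solving linear equations over $\bbZ_2$ (this is where the ``$\NAESAT$''-type structure appears), all of which are in $\P$. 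Invoking Theorem of Bulatov--Zhuk then gives membership in $\P$.

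For the hardness direction, assume the switch-core $H'$ has at least three edges; we must show $\CSP(\cP_{H'})$ is $\NP$-complete, i.e.\ that $\cP_{H'}$ admits no $\WNU$ polymorphism. Because $H'$ is a switch-core it is in particular a core (as a $\br$-graph), so $\cP_{H'}$ is a core relational structure; the task is to locate, inside $H'$, a configuration of three edges that forces a projection-like obstruction. The natural strategy is: first reduce to small cases by showing that any switch-core with at least three edges retracts onto (or contains as a switch-core-preserving substructure) one of a bounded list of ``minimal'' $\br$-graphs with exactly three edges; then, for each such minimal $H'$, exhibit an explicit indicator/gadget construction — a pp-interpretation of $3$-$\mathrm{SAT}$, graph $3$-colouring, or $1$-in-$3$-$\mathrm{SAT}$ in $\CSP(\cP_{H'})$ — or equivalently verify by a finite computation that $\cP_{H'}$ has no Taylor polymorphism. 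The role of switching here is crucial: a polymorphism of $\cP_{H'}$ must respect the switching structure, which typically collapses the symmetry that a $\WNU$ would need, and this is what one exploits.

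The main obstacle is the hardness direction, and within it the reduction to finitely many minimal three-edge configurations: a priori a switch-core can be large, and three edges can sit in many non-switch-equivalent patterns (various colour combinations, shared or disjoint endpoints, loops, interaction with the bipartite/non-bipartite dichotomy of the underlying graph and with connectedness). One must argue that extra vertices and edges can always be ``folded in'' by a switch-homomorphism or handled by a gadget without destroying hardness, so that it genuinely suffices to handle a short explicit list. Organising this case analysis cleanly — ideally by first switching $H'$ into a normal form and then splitting on whether the underlying graph is bipartite, whether purple edges occur, and whether loops occur — is the technical heart of the argument; the individual hardness gadgets, once the right minimal targets are identified, are routine.
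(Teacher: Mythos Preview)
This theorem is not proved in the present paper: it is quoted from \cite{BS} (building on \cite{BFHN}) and used as background for the list-version problems that are the paper's actual subject. There is therefore no in-paper proof to compare your proposal against.

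That said, your outline has a real gap in the hardness direction. You propose to show that any switch-core with at least three edges ``retracts onto (or contains as a switch-core-preserving substructure)'' one of finitely many minimal three-edge $\br$-graphs, and then to run hardness gadgets on each. But containment of a substructure is the wrong direction for an $\NP$-hardness reduction: if $H''$ is an induced substructure of $H'$, then $\CSP(\cP_{H''})$ does not in general reduce to $\CSP(\cP_{H'})$ unless $H''$ is a (switch-)retract of $H'$, and a switch-core by definition has no proper switch-retracts. So the ``fold extra vertices and edges in'' step cannot work as stated; you would instead need to pp-define a hard structure from $\cP_{H'}$ itself, which is a genuinely different and harder task than locating a hard subgraph. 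You acknowledge this is the technical heart, but the mechanism you sketch (retraction to a minimal configuration) is precisely what being a switch-core forbids.

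A smaller issue: the inference ``$H'$ is a switch-core, hence $\cP_{H'}$ is a core'' is not automatic. The switch map $s$ is a nontrivial automorphism of $\cP_{H'}$, and the paper itself notes (proof of Fact~\ref{fact:WNU}) that one may need to identify clone vertices $(v,1),(v,2)$ when all edges at $v$ are purple to reach the core. This does not break the approach, but it does mean one must argue about the core of $\cP_{H'}$ rather than $\cP_{H'}$ directly when invoking the $\CSP$-dichotomy.
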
 

Our goal is to determine, for a given $\br$-graph $H$, the complexity of the list version $\lsHom(H)$ of the problem $\sHom(H)$. 

Much has already been done in \cite{BBFHJ}. Building on known ideas that we explain in more detail in the next section, the authors first observe that $\lsHom(H)$ is in $\P$ if and only if $\PH$ admits a {\em semi-conservative} $\WNU$-polymorphism, which is defined at the end of Section \ref{sect:back}. 
The goal then becomes to characterise the $\br$-graphs $H$ such that $\PH$ admits a semi-conservative $\WNU$-polymorphism. 
The authors then go on to define a special bipartite-min ordering of a bipartite resolution $\Bip{H}$ of $H$, and two obstructions to its existence-- invertible pairs and chains in $\Bip{H}$, both of which are detectable in polynomial time. Using these as their main tools, they characterise the $\br$-trees admitting semi-conservative $\WNU$-polymorphisms. 
The full characterisation is not easy to state, but in the restricted case that $H$ is an irreflexive tree, it has a simpler statement. 

It is a simple task to show that an irreflexive $\br$-tree is a $\pr$-graph: working from a root outwards one can switch all pure blue edges to pure red. 
In \cite{BBFHJ}, the authors prove the following in the case that the $\pr$-graph $H$ is a tree. (The definition of $\Bip{H}$ is given in the beginning of Section \ref{sect:bipart}, and definitions of invertible pairs, chains, and special bipartite-min orderings of $\pr$-graphs are given at the beginning of Section \ref{sect:pr}.) 

\begin{conjecture}\label{conj:main}
 For a $\pr$-graph $H$, the following are equivalent.
 \begin{enumerate}
 \item $\Bip{H}$ has no invertible pairs and no chains.
 \item $\Bip{H}$ has a special bipartite-min ordering.
 \item $\PH$ has a semi-conservative $\WNU$-polymorphism. 
 \end{enumerate}
\end{conjecture}

Our main result is to prove this in the case that $H$ is a reflexive $\pr$-graph. Though irreflexive trees are $\pr$-graphs, reflexive trees need not be. So this does not strictly generalise even the reflexive results of \cite{BBFHJ}. That said, $\br$-trees are almost $\pr$-graphs in that the only pure blue edges that one cannot get rid of by switching are blue loops. 
These blue loops add considerable complication to the proofs of \cite{BBFHJ}, and it is evident from examples in \cite{BBFHJ} that Conjecture \ref{conj:main} will not generalise to $\br$-graphs without some alternation. 

Though we do not generalise the results of \cite{BBFHJ}, our result seems to be a significant complement to theirs, not only in the classes of graphs dealt with, but also in the techniques used. Our proofs make significant use of symmetries both in the definition of $\PH$, and in the definition of bipartite-min orderings.

 Section \ref{sect:back} can be viewed as, perhaps, an expansion of this introductory section. We recall many required algebraic results of $\CSP$ theory and structural results about the list homomorphism problem for graphs. 
 
 In Section \ref{sect:br} we define a switch-symmetric polymorphism which commutes with the obvious symmetry in $\PH$, and use it to observe that if a polymorphism of the red subgraph $\redSub{\PH}$ of $\PH$ is switch-symmetric, then it is a polymorphism of $\PH$. This is a useful tool in defining polymorphisms on $\PH$, but also yields a quick result. It allows us to quickly prove Theorem \ref{thm:main} which characterises the $\br$-graphs $H$ such that $\PH$ admits a conservative $\WNU$-polymorphism. 
 This depends on known characterisations of graphs with conservative $\WNU$-polymorphisms. As these do not exist for semi-conservative $\WNU$-polymorphisms though, much work remains. 
 
 In Section \ref{sect:bipart} we make observations about a `reduction to the bipartite case' for list homomorphisms of graphs that is given in \cite{FHH},
 and show that parts of it extend transparently to $\br$-graphs. 
 In particular, this leads to the definition of a `parity-symmetric' bipartite-min 
 ordering, and the following strengthening of Conjecture \ref{conj:main}.

\begin{conjecture}\label{conj:main2}
 For a $\pr$-graph $H$, the following are equivalent.
 \begin{enumerate}
 \item $\Bip{H}$ has no invertible pairs or chains.
 \item[(ii*)] $\Bip{H}$ has a parity-symmetric special bipartite-min ordering.
 \item[(iii)] $\PH$ has a semi-conservative $\WNU$-polymorphism. 
 \end{enumerate}
\end{conjecture}
 
 As $(ii*) \Rightarrow (ii)$ is trivial and $(ii) \Rightarrow (i)$ is shown in \cite{BBFHJ} this is indeed a stronger version of Conjecture \ref{conj:main}.
 In fact it consists of Conjecture \ref{conj:main} and the following, the proof of which, for graphs, follows from non-trivial results in \cite{FHH}. 
 
 \begin{conjecture}\label{conj:ps}
 For a $\pr$-graph $H$, $\Bip{H}$ has a special bipartite-min ordering if and only if it has a parity-symmetric one. 
 \end{conjecture}

 In Section \ref{sect:pr} we prove Conjecture \ref{conj:main2} for reflexive $\pr$-graphs. This is Theorem \ref{thm:Main}. This also proves Conjectures \ref{conj:main} and \ref{conj:ps} for reflexive $\pr$-graphs.  The main part proof of this consists of the reduction to the bipartite case from Section \ref{sect:bipart} and Theorem \ref{thm:specbmo} which shows that the implication $(ii^*) \Rightarrow (iii)$ holds for bipartite $\pr$-graphs. Together this gives us the implication $(ii^*) \Rightarrow (iii)$ for reflexive $\pr$-graphs. The other implications come from known results. 
 \footnote{The proof of $(i) \Rightarrow (ii*)$ for reflexive $\pr$-graphs was a big part of earlier versions of this paper.  We are indebted to a reviewer for finding mistakes in this proof. In trying to fix these mistakes, the proof grew considerably, and devolved into extensive casework. We have forgone this, as at about the same time a much more elegant proof came out in \cite{BBHJR}, in which the authors also verified Conjecture \ref{conj:main} for bipartite $\pr$-graphs. We have replaced our incomplete proof of this implication, which can still be found on the ArXiv, with a reference to \cite{BBHJR}.}


 The literature about the list homomorphism problem that we recall in Section \ref{sect:back} suggests many `polymorphism collapses' that may be useful in finding a full characterisation of the complexity of $\lsHom(H)$. While some have proved useful, some have been red herrings. In our final section, Section \ref{sect:counter} we give examples showing that some of these collapses do not occur.

\section{Background} \label{sect:back} 

In this section we recall the definitions required to properly state our problem, and make some initial observations about it. Many of these observations can also be found in \cite{BBFHJ}.

\subsection{Structures, CSP, and List Homomorphism}

A (relational) structure $\cH$ consists of a set $V = V(\cH)$ of vertices with a finite ordered set of finite arity relations $R_i \subset V^k$ on $V$. The corresponding ordered set of the arities of the structure is called its {\em type}. A {\em homomorphism} between two structures of the same type is a vertex map that preserves each relation. The problem $\CSP(\cH)$ for a structure $\cH$ is the problem of deciding if an instance structure $\cG$ of the same type admits a homomorphism to $\cH$.

A graph $H$ is viewed as a structure with one symmetric binary relation $E$, an edge $uv$ corresponding to the pairs $(u,v)$ and $(v,u)$ in $E$. A vertex map $V(G) \to V(H)$ of graphs is a homomorphism between two graphs if and only if it is a homomorphism of the corresponding symmetric binary structures. So $\Hom(H)$ is polynomially equivalent to $\CSP(H)$.

In the list variation $\lHom(H)$ of the homomorphism problem $\Hom(H)$, an instance $G$ comes attached with a list $L(v) \subset V(H)$ for each vertex $v \in V(G)$, and one must decide if there is a homomorphism $\phi:G \to H$ such that $\phi(v) \in L(v)$ for each vertex $v$ of $G$.
This is equivalent to $\CSP(H_\unary)$ where $\cH_\unary$ is the relational structure one gets from a structure $\cH$ by adding a new unary relation $L_S= \{ (s) \mid s \in S \}$ for each subset $S \subset V(\cH)$.

The {\em core} $\cC(\cH)$ of a structure $\cH$ is the unique minimum induced substructure to which it admits a homomorphism. It is basic that $\CSP(\cC(\cH))$ is polynomially equivalent to $\CSP(\cH)$.

 A $k$-ary polymorphism $\phi$ of $\cH$ is a homomorphism $\phi: \cH^k \to \cH$ (that is, a map preserving all relations of $\cH$), where $\cH^k$ is the $k$-time categorical product of $\cH$ with itself. It is {\em idempotent} if $\phi(h,h, \dots, h) = h$ for all $h \in V(\cH)$, and it is {\em conservative} if
$\phi(v_1, \dots, v_k) \in \{v_1, \dots, v_k\}$. An idempotent polymorphism $\phi: \cH^k \to \cH$, for $k \geq 3$ is {\em weak near unanimity} or $\WNU$ if for every choice of $x,y \in V(\cH)$ we have
 \[ \phi(x,x, \dots, x,y) = \phi(x,x, \dots, y,x) = \dots \phi(y,x, \dots, x,x). \]

\begin{theorem}[The $\CSP$-dichotomy, \cite{Bu17,Zh17}]\label{thm:CSPdichot}
 For a relational structure $\cH$ the problem $\CSP(\cH)$ is in $\P$ if the core of $\cH$ admits a $\WNU$-polymorphism, and is otherwise in $\NPC$.
\end{theorem}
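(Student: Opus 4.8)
The statement is the celebrated $\CSP$ dichotomy, and a full proof is far beyond what can be sketched; the plan is to recall the two-part architecture common to the proofs of \cite{Bu17} and \cite{Zh17}. The assertion is an equivalence --- the core $\cC(\cH)$ admits a $\WNU$-polymorphism exactly when $\CSP(\cH)$ is in $\P$ --- and I would treat the two implications separately. First one reduces to the case that $\cH$ is a core, which is harmless since $\CSP(\cC(\cH))$ is polynomially equivalent to $\CSP(\cH)$, and then, by adjoining the singleton unary relations $\{(a)\}$ for $a \in V(\cH)$ --- a standard reduction for core templates that alters neither the complexity nor, since $\WNU$ operations are idempotent, the existence of a $\WNU$-polymorphism --- to the case that $\cH$ is idempotent.

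\emph{The hardness direction.} Arguing contrapositively, suppose the core admits no $\WNU$-polymorphism. By the classical characterisations of Taylor terms --- Mar\'oti--McKenzie, Barto--Kozik's cyclic terms, and Siggers' single identity --- for a finite idempotent algebra the absence of a $\WNU$ term is equivalent to the generated variety admitting the unary type of tame congruence theory, and this permits one to show that $\cH$ \emph{pp-constructs} a template with $\NP$-complete $\CSP$, for instance Boolean $1$-in-$3$. Since pp-constructions --- pp-interpretations combined with homomorphic equivalence --- preserve $\CSP$ complexity up to polynomial-time reductions (the Bulatov--Jeavons--Krokhin algebraic reduction, in the formulation of Barto--Opr\v{s}al--Pinsker), it follows that $\CSP(\cH)$ is $\NP$-hard. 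Concretely, I would set up the pp-constructibility quasiorder on finite templates and then feed in the algebraic input above.

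\emph{The algorithmic direction.} Here a $\WNU$-polymorphism of the core must be turned into a polynomial-time decision procedure, and this is the substance of the two papers. I would follow Zhuk's route: given an instance over domains carrying a $\WNU$ (equivalently a Siggers) term, run a recursive simplification which on each pass either contracts some variable's domain to a proper \emph{strong subuniverse} --- a binary absorbing subuniverse, a central subuniverse, or a block of a polynomially-complete congruence --- without losing solvability, or detects an affine (``linear'') structure that splits off a part solvable by Gaussian elimination and leaves a strictly smaller residual instance; iterating drives the instance to a triviality. Bulatov's alternative instead proceeds through the structure of the coloured graph of a finite algebra (edges coloured by local type --- semilattice, majority, affine), reduces to \emph{smooth} instances, and solves these by propagating local consistency against the colouring, building on his earlier solution of the conservative case.

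\emph{The main obstacle.} In both routes the crux is a \emph{progress} statement: on a no-instance one can always either contract some domain to a proper strong subuniverse or nontrivially decompose the instance, using only the structural consequences of the $\WNU$ (Siggers) term. Proving this is a long and delicate study of the fine structure of finite idempotent algebras --- absorption theory, commutator and tame congruence theory, polynomially complete algebras --- and is well beyond the scope of a sketch. In the present paper Theorem~\ref{thm:CSPdichot} is used only as a black box, in tandem with the fact that for the templates $\PH$ relevant here the $\WNU$-polymorphisms in question can be exhibited explicitly.
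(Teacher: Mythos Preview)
Your proposal is appropriate: the paper does not prove Theorem~\ref{thm:CSPdichot} at all, but simply cites it from \cite{Bu17} and \cite{Zh17} and uses it as a black box throughout, exactly as you note in your final paragraph. Your high-level sketch of the Bulatov and Zhuk architectures is a fair summary of those external sources, but no comparison with the paper's own proof is possible since there is none.
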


 In general it is difficult to find the core of a structure $\cH$, 
 and tends to be difficult to give a non-algebraic description of the structures $\cH$ for which $\CSP(\cH)$ is in $\P$. For list colouring, it gets a little easier. For one, the structure $H_\unary$ is always a core. It is easy to see that any polymorphism of $H_\unary$ is conservative, so for list colouring problems we get the following corollary of Theorem \ref{thm:CSPdichot} which Bulatov actually proved years earlier in \cite{BulList}. 

\begin{corollary}
For a relational structure $\cH$ the problem $\CSP(\cH_\unary)$ is in $\P$ if and only if the core of $\cH$ admits a conservative $\WNU$-polymorphism.
\end{corollary}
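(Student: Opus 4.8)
The plan is to read the corollary off the $\CSP$-dichotomy (Theorem~\ref{thm:CSPdichot}) by applying that theorem to the structure $\cH_\unary$ itself and then simplifying its hypothesis in two small steps.

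First I would note that $\cH_\unary$ is rigid, hence equal to its own core: any endomorphism $e$ of $\cH_\unary$ must preserve the singleton relation $L_{\{v\}}$ for every vertex $v$, which forces $e(v)=v$, so $e=\id$. Consequently $\cC(\cH_\unary)=\cH_\unary$, and Theorem~\ref{thm:CSPdichot} applied to $\cH_\unary$ says precisely that $\CSP(\cH_\unary)$ is in $\P$ if and only if $\cH_\unary$ admits a $\WNU$-polymorphism.

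Second I would observe that the word ``$\WNU$'' above can be strengthened to ``conservative $\WNU$'' for free. On the one hand, every polymorphism $\phi$ of $\cH_\unary$ is automatically conservative: given $v_1,\dots,v_k\in V(\cH)$, set $S=\{v_1,\dots,v_k\}$; then $v_1,\dots,v_k\in L_S$, so preservation of $L_S$ forces $\phi(v_1,\dots,v_k)\in S=\{v_1,\dots,v_k\}$. On the other hand, a conservative operation on $V(\cH)$ preserves every relation $L_S$ by definition, so it is a polymorphism of $\cH_\unary$ exactly when it is a polymorphism of $\cH$. Hence $\cH_\unary$ admits a $\WNU$-polymorphism iff it admits a conservative one iff $\cH$ admits a conservative $\WNU$-polymorphism; and since the core of the rigid structure $\cH_\unary$ is $\cH_\unary$ itself, this is exactly what ``the core admits a conservative $\WNU$-polymorphism'' amounts to here. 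Combined with the first step, this is the corollary.

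Essentially no step is an obstacle: the statement is a direct transcription of Theorem~\ref{thm:CSPdichot} once one sees that $\cH_\unary$ is a rigid core whose polymorphisms are all conservative, both facts being immediate from the definition of $\cH_\unary$. The only point requiring a little care is the bookkeeping around the word ``core'' — since $\cH_\unary$ is already rigid, the core in play is $\cH_\unary$ itself, and the second step is precisely what makes the unary relations, and hence the passage between $\cH$ and $\cH_\unary$, harmless once conservativity is imposed.
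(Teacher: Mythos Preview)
Your argument is correct and is essentially the paper's own: the paragraph preceding the corollary records exactly your two observations—that $\cH_\unary$ is (rigid, hence) a core, and that every polymorphism of $\cH_\unary$ is conservative—and then invokes Theorem~\ref{thm:CSPdichot}. You simply spell out why each observation holds, which the paper leaves as ``easy to see.''
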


\subsection{The switching graph}

One might observe that $\sHom(H)$ for a $\br$-graph $H$ is not a $\CSP$, so the above tools do not apply, but one of the main tools in \cite{BFHN} and \cite{BS} is the following construction which allows us to view $\sHom(H)$ as a $\CSP$.

For a product $A \times B$ of sets or of graphs, we use $\pi_A$ to denote projection map onto $A$ and $\pi_B$ the projection map onto $B$.
 
\begin{definition}\label{def:switchgraph}
 Let $H$ be a $\br$-graph, and $S$ be the set $\{0,1\}$. The {\em switching graph} $\PH$ of $H$ is the $\br$-graph on the set $V(H) \times S$, such that each of the sets 
 $\pi_S^{-1}(i) = \{ (v,i) \mid v \in v(H) \}$ induces a copy of $H$ (identified through the projection $\pi_H$ in the obvious way), and such that, where the {\em switch} map $\sm: V(\PH) \to V(\PH)$ is defined $\sm( (v,i)) = (v, 1-i)$, the edge $u\sm(v)$ is red if $uv$ is blue,
$uf_s(f)$ is blue if $uv$ is red. Again, the edge $u\sm(v)$ will be purple if $uv$ is purple. 
\end{definition}

\begin{figure}
\begin{center}
\begin{tikzpicture}[every loop/.style={},scale=0.7]
 \node[blackvertex,label={270:$u$}] (u) at (0,0) {};
 \node[blackvertex] (v) at (0,1.5) {};
 \node[blackvertex] (w) at (0,3) {};
 \node[blackvertex,label={90:$v$}] (x) at (0,4.5) {};
 \draw[thick,blue] (u) to[bend left=5] (v) (w)--(x);
 \draw[thick,red,dashed] (u) to[bend left=20] (x) (v)--(w) (u) to[bend right=5] (v);
 \path[thick,red,dashed] (x) edge[out=150,in=210,loop, min distance=10mm] node {} (x);
 \path[thick,blue] (u) edge[out=150,in=210,loop, min distance=10mm] node {} (u);

 \node at (0,-1) {$H$};

\begin{scope}[xshift=3.5cm]
 \node[blackvertex,label={270:$(u,0)$}] (u0) at (0,0) {};
 \node[blackvertex] (v0) at (0,1.5) {};
 \node[blackvertex] (w0) at (0,3) {};
 \node[blackvertex,label={90:$(v,0)$}] (x0) at (0,4.5) {};
 \draw[thick,blue] (u0) to[bend left=5] (v0) (w0)--(x0);
 \draw[thick,red,dashed] (u0) to[bend left=20] (x0) (v0)--(w0) (u0) to[bend right=5] (v0) ;

 \node[blackvertex,label={270:$(u,1)$}] (u1) at (2,0) {};
 \node[blackvertex] (v1) at (2,1.5) {};
 \node[blackvertex] (w1) at (2,3) {};
 \node[blackvertex,label={90:$(v,1)$}] (x1) at (2,4.5) {};
 \draw[thick,blue] (u1) to[bend left=5] (v1) (w1)--(x1);
 \draw[thick,red,dashed] (u1) to[bend right=20] (x1) (v1)--(w1) (u1) to[bend right=5] (v1);

 \path[thick,red,dashed] (x0) edge[out=150,in=210,loop, min distance=10mm] node {} (x0);
 \path[thick,blue] (u0) edge[out=150,in=210,loop, min distance=10mm] node {} (u0);

 \path[thick,red,dashed] (x1) edge[out=30,in=-30,loop, min distance=10mm] node {} (x1);
 \path[thick,blue] (u1) edge[out=30,in=-30,loop, min distance=10mm] node {} (u1);

 \draw[thick,blue] (x0)--(x1);
 \draw[thick,red,dashed] (u0)--(u1);

 \draw[thick,red,dashed] (u0) to[bend left=5] (v1) (w0)--(x1) (u1) to[bend left=5] (v0) (w1)--(x0);
 \draw[thick,blue] (u0) to[bend right=5] (v1) (u1) to[bend right=5] (v0) (u0)--(x1) (u1)--(x0) (v1)--(w0) (v0)--(w1);

 \node at (1,-1) {$\PH$};
\end{scope}

\end{tikzpicture}
\end{center}
\caption{A $\br$-graph $H$ and its switching graph $\PH$.}
\label{fig1}
\end{figure}
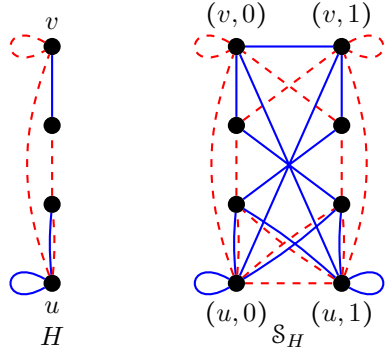

See Figure~\ref{fig1} for an example of the switching graph construction. Blue edges are solid, while red edges are dashed. 

The following proposition implies that $\sHom(H)$ is polynomially equivalent to
the homomorphism problem $\Hom(\PH)$ for the $\br$-graph $\PH$.
\begin{proposition}[\cite{BFHN}]\label{prop:switch}
 For $\br$-graphs $G$ and $H$, $G$ admits a switch-homomorphism to $H$ if and only if
 it admits a homomorphism to $\PH$.
\end{proposition}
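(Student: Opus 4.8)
The plan is to prove both implications by an explicit, case-based translation between the ``second coordinate'' of a homomorphism into $\PH$ and the choice of a set of vertices at which to switch. It is worth keeping in mind the structural picture behind the construction: the switch map $s$ is a fixed-point-free involutive automorphism of $\PH$, the projection $\pi\colon\PH\to H$, $(v,i)\mapsto v$, identifies $H$ with the quotient, each copy $H_1,H_2$ is a colour-preserving copy of $H$, and the ``cross'' edges (those joining $H_1$ to $H_2$) carry the colour opposite to the corresponding edge of $H$, by Definition~\ref{def:switchgraph}. Switching $G$ at a set $S$ of vertices will correspond exactly to post-composing a homomorphism $G\to\PH$ with $s$ on the vertices of $S$.

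For the ``if'' direction I would start from a homomorphism $\psi\colon G\to\PH$, write $\psi(x)=(\phi(x),\sigma(x))$ with $\phi=\pi\circ\psi$ and $\sigma\colon V(G)\to\{1,2\}$, and let $G'$ be obtained from $G$ by switching at $S=\sigma^{-1}(2)$. Then I would verify that $\phi\colon G'\to H$ is a homomorphism by examining an edge $xy$ of $G$ according to whether $0$, $1$, or $2$ of its endpoints lie in $S$. If both or neither lie in $S$, the colour of $xy$ is unchanged in $G'$, while $\psi(x)\psi(y)$ is an edge of $H_1$ or $H_2$, so the colour of $\phi(x)\phi(y)$ in $H$ equals the colour of $xy$ in $G'$. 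If exactly one endpoint lies in $S$, the colour of $xy$ flips in passing to $G'$, while $\psi(x)\psi(y)$ is a cross edge, whose colour is opposite to that of $\phi(x)\phi(y)$ in $H$; so again the colour of $\phi(x)\phi(y)$ in $H$ matches the colour of $xy$ in $G'$. Hence $\phi\colon G'\to H$ is a homomorphism and $\phi$ is a switch homomorphism $G\to H$.

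For the ``only if'' direction, suppose $\phi\colon G\to H$ is a switch homomorphism, witnessed by some $G'$ switch-equivalent to $G$ with $\phi\colon G'\to H$ a homomorphism. Since single-vertex switches commute and are involutive, $G'$ is obtained from $G$ by switching at some set $S\subseteq V(G)$. I would then define $\psi\colon G\to\PH$ by $\psi(x)=(\phi(x),1)$ for $x\notin S$ and $\psi(x)=(\phi(x),2)$ for $x\in S$, and run the same three-case analysis in reverse: for an edge $xy$ with both or neither endpoint in $S$, its colour in $G$ equals its colour in $G'$, equals the colour of $\phi(x)\phi(y)$ in $H$, equals the colour of $\psi(x)\psi(y)$ inside the relevant copy; for an edge with exactly one endpoint in $S$, its colour in $G$ is opposite to its colour in $G'$, which is the colour of $\phi(x)\phi(y)$ in $H$, which by the colour-flip on cross edges is again the colour of $\psi(x)\psi(y)$.

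The only genuinely delicate point — the main obstacle, such as it is — is the bookkeeping of the colour conventions in Definition~\ref{def:switchgraph}: I must be careful that a red edge $uv$ of $H$ really does give rise to red edges inside $H_1$ and $H_2$ together with blue cross edges $(u,2)(v,1)$ and $(u,1)(v,2)$ (and symmetrically for blue edges), so that in the ``exactly one endpoint switched'' case the cross edge one lands on genuinely carries the flipped colour; this needs to be stated once, carefully, including for purple edges (which one handles by treating red and blue separately) and for loops of $H$ (note a loop of $G$ can never have exactly one endpoint in $S$, and every loop of $\PH$ lies inside $H_1$ or $H_2$, so loops never interact with the cross-edge case). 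Once these conventions are fixed, the whole argument is a routine verification.
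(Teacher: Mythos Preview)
Your argument is correct and is exactly the standard proof of this fact. Note, however, that the paper does not actually prove Proposition~\ref{prop:switch}: it is quoted from \cite{BFHN} and stated without proof, so there is no ``paper's own proof'' to compare against. Your write-up is the natural back-and-forth between the second coordinate of a map into $\PH$ and the set $S$ of switched vertices, and your case analysis (both/neither endpoint in $S$ versus exactly one) together with the colour-flip on cross edges is precisely how the result is established in \cite{BFHN}. The bookkeeping remarks you flag about purple edges and loops are handled correctly; in particular, a loop at $u$ in $H$ produces same-colour loops at $(u,1),(u,2)$ and an opposite-colour cross edge $(u,1)(u,2)$, consistent with your description, and switching at a vertex leaves the colour of a loop unchanged, so loops never enter the cross-edge case.
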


\subsection{Definition of List Switch-Homomorphism}

Though the problems $\sHom(H)$ and $\Hom(\PH)$ are polynomially equivalent, they have different list versions. For a $\br$-graph $H$ we define the following two problems.

\medskip
\n $\lsHom(H)$ \\
Instance: A $\br$-graph $G$ with lists $L:V(G) \to 2^{V(H)}$.\\
Question: Does $G$ admit a switch-homomorphism $\phi:G \to H$ preserving lists?
\medskip

\medskip
\n $\lpHom{H}$ \\
Instance: A $\br$-graph $G$ with lists $L:V(G) \to 2^{V(\PH)}$.\\
Question: Does $G$ admit a homomorphism $\phi: G \to \PH$ preserving lists?
\medskip

The second problem is clearly equivalent to the $\CSP$ problem $\CSP(\PH^{\unary})$. The first problem can also be made equivalent to a $\CSP$ using Proposition \ref{prop:switch}. Indeed, from the proof of this proposition, it is not hard to see that it is polynomially equivalent to $\CSP(\PH^{s\unary})$ where we get $\PH^{s\unary}$ from $\PH^{\unary}$ by throwing away non-symmetric lists, i.e., throwing away the unary relation $L_S$ for any subset $S \subset V(H)$ that is not closed under action by the switch map $s$ from Definition \ref{def:switchgraph}.

Clearly $\lpHom{H}$ encodes $\lsHom(H)$ by taking only instances with symmetric lists, and as the full lists are symmetric, $\lsHom(H)$ encodes $\Hom(\PH)$ by taking instances with full lists on every vertex. Thus we have the following polynomial reductions. 
 \begin{equation}\label{eq:heir}
 \Hom(\PH) \pleq \lsHom(H) \pleq \lpHom{H}
 \end{equation}

 Calling a polymorphism of $\PH$ {\em semi-conservative} if it conserves symmetric lists, Theorem \ref{thm:CSPdichot} yields a nice algebraic description of the relationship between 
 the problems $\lsHom(H)$ and $\lpHom{H}$.

\begin{fact} \label{fact:WNU} 
 For a $\br$-graph $H$, $\lsHom(H)$ is in $\P$ if and only if $\PH$ admits a semi-conservative $\WNU$-polymorphism, and 
 $\lpHom{H}$ is in $\P$ if and only if $\PH$ admits a conservative $\WNU$-polymorphism.
\end{fact}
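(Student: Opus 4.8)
The plan is to reduce both problems to instances of $\CSP$ and apply Theorem~\ref{thm:CSPdichot}. The statement about $\lpHom{H}$ is essentially already in hand. By definition $\lpHom{H}$ is the problem $\CSP(\PH^{\unary})$, and since $\PH^{\unary}$ carries every singleton relation $L_{\{w\}}$ it is a core (any endomorphism fixes each vertex); thus Theorem~\ref{thm:CSPdichot} says that $\lpHom{H}$ is in $\P$ if and only if $\PH^{\unary}$ admits a $\WNU$-polymorphism. Now a $\WNU$-polymorphism of $\PH^{\unary}$ is exactly a $\WNU$-polymorphism of $\PH$ preserving every relation $L_S$; since conservativity of $\phi$ is equivalent to $\phi$ preserving all $L_S$ (take $S=\{v_1,\dots,v_k\}$ one way, and use $\phi(v_1,\dots,v_k)\in\{v_1,\dots,v_k\}\subseteq S$ the other), this is a conservative $\WNU$-polymorphism of $\PH$, which proves the second half.

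For the first half I would start from the polynomial equivalence $\lsHom(H)\peq\CSP(\PH^{s\unary})$ recorded above, which comes from Proposition~\ref{prop:switch} by restricting to instances whose lists are switch-closed. The structure $\PH^{s\unary}$ keeps the unary relation $L_S$ exactly for those $S\subseteq V(\PH)$ that are closed under the switch map $s$, so, unwinding the definitions again, a $\WNU$-polymorphism of $\PH^{s\unary}$ is precisely a semi-conservative $\WNU$-polymorphism of $\PH$. Hence the first half of the Fact follows once we know that $\CSP(\PH^{s\unary})$ is in $\P$ if and only if $\PH^{s\unary}$ itself admits a $\WNU$-polymorphism.

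Here is the one real subtlety: unlike $\PH^{\unary}$, the structure $\PH^{s\unary}$ need not be a core. For instance, if $H$ is purple then the endomorphism of $\PH$ that identifies the two copies $H_1$ and $H_2$ preserves every switch-closed $L_S$, so $\PH^{s\unary}$ retracts onto a proper substructure. What Theorem~\ref{thm:CSPdichot} gives directly is that $\CSP(\PH^{s\unary})$ is in $\P$ if and only if the core $\cC$ of $\PH^{s\unary}$ has a $\WNU$-polymorphism, so I would need to show that $\PH^{s\unary}$ has a $\WNU$-polymorphism if and only if $\cC$ does. One direction is routine: if $\phi$ is a $\WNU$-polymorphism of $\PH^{s\unary}$ and $r\colon\PH^{s\unary}\to\cC$ is a retraction, then $(y_1,\dots,y_k)\mapsto r\bigl(\phi(y_1,\dots,y_k)\bigr)$ is an idempotent $\WNU$-polymorphism of $\cC$ (it is a homomorphism $\cC^{k}\to\cC$, it inherits the $\WNU$ identities, and it is idempotent since $r$ fixes $\cC$ pointwise).

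I expect the reverse direction --- lifting a $\WNU$-polymorphism $\psi$ from $\cC$ back to $\PH^{s\unary}$ --- to be the main obstacle. The naive candidate, applying $\psi$ after the retraction $r$ on each coordinate, is a $\WNU$-homomorphism but fails idempotency exactly on the fibres $\{(v,1),(v,2)\}$ that $r$ collapses. To repair it I would exploit two structural features of $\PH^{s\unary}$: every endomorphism --- in particular $r$ --- is fibre-preserving, since each fibre $\{(v,1),(v,2)\}$ is itself a switch-closed set and hence a unary relation; and the switch map $s$ is an automorphism of $\PH^{s\unary}$. Using these one can recompute the ``copy'' coordinate of the output from the copy coordinates of the inputs, restoring idempotency while retaining the $\WNU$ identities; the delicate check is that the repaired map still preserves red and blue edges, and in particular the cross edges of $\PH$, which is precisely where the construction of the switching graph in Definition~\ref{def:switchgraph} is used. (When $\PH^{s\unary}$ is already a core this step is vacuous and the Fact is an immediate consequence of Theorem~\ref{thm:CSPdichot}.)
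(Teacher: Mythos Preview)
Your approach is essentially the paper's: reduce both problems to $\CSP$, invoke Theorem~\ref{thm:CSPdichot}, and observe that the only subtlety is that $\PH^{s\unary}$ need not be a core. The paper's proof is in fact terser than yours. It explicitly treats only the direction you call ``routine'': given a semi-conservative $\WNU$ $\phi$ on $\PH$ and the retraction $r$ onto the core (which, exactly as you say, collapses the clone pair $\{(v,1),(v,2)\}$ for each $v$ whose edges are all purple), it checks that $r\circ\phi|_{r(\PH)^k}$ is a $\WNU$ on the core. The converse---lifting a $\WNU$ from the core back to $\PH^{s\unary}$---is not addressed in the paper at all.

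You are right to flag that converse as the real content, and your sketch is on the correct track. Because the fibres of $r$ are precisely the switch-orbits $\{(v,1),(v,2)\}$ for purple $v$, the two vertices in each collapsed fibre are genuine clones in $\PH^{s\unary}$ (identical red and blue neighbourhoods, identical symmetric unary relations). So given a $\WNU$ $\psi$ on the core one can set $\Phi(x_1,\dots,x_k)=\psi(r(x_1),\dots,r(x_k))$ whenever the $r(x_i)$ are not all equal, and set $\Phi$ to the $k$-ary majority on $\{(v,1),(v,2)\}$ when they are; the clone property makes the edge checks go through, and idempotency and the $\WNU$ identities are immediate. This realises your ``recompute the copy coordinate'' idea and is somewhat simpler than trying to use the switch automorphism directly. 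In short, your proposal matches the paper's argument where they overlap and is more complete where they do not.
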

\begin{proof}
 Everything would be immediate from Theorem \ref{thm:CSPdichot} except that to apply it we need that the structures are cores, while the structure $\PH^{s\unary}$, which we use for $\lsHom(H)$, need not be a core. So we have to observe that if 
 $\PH$ admits a semi-conservative $\WNU$-polymorphism then the core of $\PH^{s\unary}$ admits a $\WNU$-polymorphism. To get the core of $\PH^{s\unary}$ we must identify the vertices $(v,0)$ and $(v,1)$ for any vertex $v$ of $H$ all of whose edges are purple. Let $r$ be the retraction of $\PH$ to its core that we get by making all of these identifications. It is easy to check that if $\phi: \PH^k \to \PH$ is a semi-conservative $\WNU$-polymorphism then $\phi' = r \circ \phi|_{r(\PH)^k}$ is a $\WNU$-polymorphism on the core $r(\PH)$. 
 \end{proof}

 \subsection{Collapses of Conservative Polymorphisms}\label{sub:collapses}
 
 Our goal is a nice characterisation of the $\br$-graphs $H$ such that $\PH$ admits a (semi-)conservative $\WNU$-polymorphism. In the case of other graph-like structures, such characterisations often come from the fact that a structure has a conservative $\WNU$-polymorphism if and only if it has some other nicer polymorphism such as a conservative $\kNU{3}$-polymorphism (also known as a conservative majority polymorphism). 

\begin{definition}
 A polymorphism $\phi: H^3 \to H$ on a graph $H$ is {\em $\kNU{3}$} if for any $x,y \in V(H)$, we have
 \[\phi(x,x,y) = \phi(x,y,x) = \phi(y,x,x) = x. \]
\end{definition}

 It is clear that a $\kNU{3}$-polymorphism is $\WNU$, but one can find many examples of structures with $\WNU$-polymorphisms that do not have $\kNU{3}$-polymorphisms.
 However, there is a collapse of these two types of polymorphisms for graphs. 
 
 \begin{theorem}[\cite{FHH}]\label{thm:FHHshort}
 A graph $H$ has a conservative $\WNU$-polymorphism if and only if it has a conservative $\kNU{3}$-polymorphism. 
\end{theorem}

 Another useful conservative $\WNU$-polymorphism is disguised as an ordering. 
 Recall that an ordering $\leq$ of a set $V$ can be viewed as a $2$-ary function $\min:V^2 \to V$ such that $\min(u,v) = \min(v,u) \in \{u,v\}$ for all $u,v \in V$.
 
\begin{definition}\label{def:min}
 An ordering $\leq$ of the vertices of a reflexive graph is a {\em min ordering} if the corresponding $2$-ary function $\min$ is a polymorphism. 
 \end{definition}
 A graph with a min ordering admits the induced $3$-ary operation $\min(a,b,c) = \min(\min(a,b), c)$ which is a conservative $\WNU$-polymorphism of $H$; we call this the {\em min polymorphism} associated with the min ordering. 
 For reflexive graphs, there is a collapse of $\WNU$-polymorphisms to min orderings.   
 \begin{theorem}[\cite{FederHellList}]\label{thm:min-order}
 For a reflexive graph $H$ the following are equivalent.
 \begin{itemize}
 \item The problem $\lHom(H)$ is in $\P$.
 \item $H$ admits a conservative $\WNU$-polymorphism.
 \item $H$ admits a min ordering. 
 \item $H$ contains no invertible pairs. 
 \end{itemize}
 \end{theorem} 
 
 The existence of min orderings is known to characterise the well known class of interval graphs, so a reflexive graph has a conservative $\WNU$-polymorphism if and only if it is an interval graph. 
 Min orderings are also known to be characterised by the fact that they satisfy the {\em underbar} property:
 \begin{equation}\label{eq:underbar}
 ( a \leq a', b \leq b', a \sim b', a' \sim b ) \Rightarrow ( a \sim b).
 \end{equation}
 
 No non-empty irreflexive graph can have a min ordering, but there is a generalisation, called a bipartite-min ordering, defined in Section \ref{sect:bipart} on what we call the bipartite resolution $\Bip{H}$ of $H$, that exists for many irreflexive graphs.

 It was shown in \cite{FV98}, for a graph $H$ with a bipartite-min ordering, that the problem $\lHom(H)$ can be solved in polynomial time via the arc-consistency algorithm. In \cite{HHMR} the authors generalised the notion of bipartite-min orderings to digraphs, unifying many known list homomorphism characterisations. 
 
 \begin{theorem}\label{thm:min-order-irr} \cite{HHMR}
 For a graph $H$ the following are equivalent.
 \begin{itemize}
 \item The problem $\lHom(H)$ is in $\P$.
 \item $H$ admits a conservative $\WNU$-polymorphism.
 \item $H$ admits a bipartite-min ordering. 
 \item $\Bip{H}$ contains no invertible pairs. 
 \end{itemize}
 \end{theorem}

\section{Dichotomy for List $\PH$-colouring} \label{sect:br}

Recall the switch automorphism $\sm: \PH \to \PH$ that maps $(v,i)$ to $(v,1-i)$. This is clearly also an automorphism of the red subgraph $\redSub{\PH}$ of $\PH$. A $k$-ary polymorphism $\phi$ of $\PH$, or of $\redSub{\PH}$, is {\em switch-symmetric} if it commutes with $\sm$:
\[ \phi(\sm(v_1), \dots, \sm(v_k)) = \sm(\phi(v_1, \dots, v_k)). \]

\begin{lemma}\label{lem:ss}
 If a polymorphism of $\redSub{\PH}$ is switch-symmetric, then it is a polymorphism of $\PH$.
\end{lemma}
\begin{proof}
 Let $\phi: (\redSub{\PH})^k \to \redSub{\PH}$ be switch-symmetric. To see that it is a polymorphism of $\PH$ it is enough to observe that it is also a polymorphism of $\blueSub{\PH}$. Let $x_i \sim y_i$ in $\blueSub{\PH}$ for each $i \in [k]$.
 By the definition of $\PH$ we have that $x_i \sim \sm(y_i)$ in $\redSub{\PH}$ for each $i$. So as $\phi$ is a polymorphism of $\redSub{\PH}$
 we get
 \[ \phi(x_1, \dots, x_k) \sim \phi(\sm(y_1), \dots, \sm(y_k)) = \sm(\phi(y_1, \dots, y_k)) \]
 is a red edge, and so $\phi( x_1, \dots, x_k) \sim \phi( y_1, \dots, y_k)$ is a blue edge. This is enough. 
\end{proof}

We will use this in Section \ref{sect:pr}, but for now we exhibit its utility by using it to characterise the graphs $\PH$ having conservative $\WNU$-polymorphisms. This 
yields a simple complexity dichotomy for the problem $\lpHom{H}$.

 \begin{theorem}\label{thm:main}
 For a $\br$-graph $H$, $\PH$ admits a conservative $\WNU$-polymorphism if and only if $\redSub{\PH}$ admits a conservative $\kNU{3}$-polymorphism.
\end{theorem}

The proof of this theorem takes the rest of the section, but consists mostly of observations about known results. We recall these results and then give the proof formally at the end of the section. 
First observe the following hierarchy of polymorphisms.
\begin{eqnarray}\label{eq:easydir}
 \PH \mbox{ has a conservative } \kNU{3} & \Rightarrow & \PH \mbox{ has a conservative } \WNU \\ \notag
 & \Rightarrow & \redSub{\PH} \mbox{ has a conservative } \WNU \\ \notag
 & \Rightarrow & \redSub{\PH} \mbox{ has a conservative } \kNU{3} 
\end{eqnarray}
The first two implications are trivial; indeed, the first is by definition, and the second is because a polymorphism of a structure is a polymorphism of any relation of the structure. We get the third implication by applying Theorem \ref{thm:FHHshort} to the graph $\redSub{\PH}$.
To prove Theorem \ref{thm:main} it is enough to show that a $\kNU{3}$-polymorphism on $\redSub{\PH}$ may be assumed to be switch-symmetric, as then by Lemma \ref{lem:ss} we get that the four statements in \eqref{eq:easydir} are equivalent. 
In fact one can assume more about the $\kNU{3}$-polymorphism on $\redSub{\PH}$.

 \begin{definition}\label{def:commute}
 A conservative $\kNU{3}$-polymorphism $\phi: H^3 \to H$ is {\em symmetric} if it commutes with all automorphisms $\sigma$ of $H$:
 \[ \phi(\sigma(a),\sigma(b),\sigma(c)) = \sigma(\phi(a,b,c)). \]
 \end{definition}

 
 In \cite{HR11}, while giving a characterisation of the digraphs $H$ for which $\lHom(H)$ is tractable, Hell and Rafiey found a useful omitted subgraph characterisation of digraphs with conservative $\NU$-polymorphisms. Their definition is for digraphs, but we give it here for graphs.

 \begin{definition}
 A walk $P = u_0\sim u_1\sim \dots \sim u_n$ of a graph $H$ {\em avoids} a walk $Q = v_0 \sim v_1 \sim \dots \sim v_n$ of $H$ of the same length if $u_{i-1} \not\sim v_{i}$ for each $i \in [n]$.
 For a $3$-tuple $(a,b,c)$ of vertices of $H$, a {\em $b$-excluder} in $H$ is a set of three walks $B_a, B_b$ and $B_c$ of the same length, starting at $a,b$ and $c$ respectively, such that
 $B_a$ and $B_c$ share the same last vertex, and $B_b$ avoids $B_a$ and $B_c$. See Figure \ref{fig:BExcluder}. 
 A $3$-tuple $(a,b,c)$ is a {\em permutable triple} if $H$ contains an $a$-excluder, a $b$-excluder, and a $c$-excluder.
 \end{definition}

 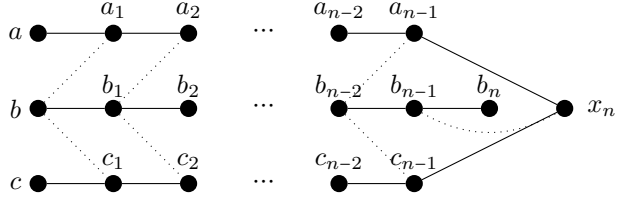
\begin{figure}
 \begin{center}
 \begin{tikzpicture}[every node/.style={blackvertex}]
 \Verts[]{a5/0/2, a4/1/2, a3/2/2, a2/4/2, a1/5/2, x0/7/1}
 \Verts[]{b5/0/1, b4/1/1, b3/2/1, b2/4/1, b1/5/1, b0/6/1}
 \Verts[]{c5/0/0, c4/1/0, c3/2/0, c2/4/0, c1/5/0}
 \Vlabels{a/left/a5, a_1/above/a4, a_2/above/a3, a_{n-2}/above/a2, a_{n-1}/above/a1}
 \Vlabels{b/left/b5, b_1/above/b4, b_2/above/b3, b_{n-2}/above/b2, b_{n-1}/above/b1, b_n/above/b0}
 \Vlabels{c/left/c5, c_1/above/c4, c_2/above/c3, c_{n-2}/above/c2, c_{n-1}/above/c1}
 \foreach \i in {0,1,2}{\draw (3,\i) node[empty](){$\cdots$};}

 \draw (7.5,1) node[empty](){$x_n$};
 \Edges[black]{a5/a4, a4/a3, a2/a1, a1/x0}{}
 \Edges[black]{b5/b4, b4/b3, b2/b1, b1/b0}{}
 \Edges[black]{c5/c4, c4/c3, c2/c1, c1/x0}{}
 \Edges[black,dotted]{b5/a4, b4/a3, b2/a1}{}
 \Edges[black,dotted]{b5/c4, b4/c3, b2/c1}{}
 \Edges[black,dotted, bend right]{b1/x0}{}
 \end{tikzpicture}
 \end{center}
 \caption{A $b$-excluder for $(a,b,c)$} \label{fig:BExcluder}
 \end{figure}

In Theorem 4.1 of \cite{HR11}, it was shown that a graph $H$ has a $\kNU{3}$-polymorphism if and only if it contains no permutable triples. In the proof, it was observed that 
if a graph $H$ has no permutable triples then for any $3$-tuple $(x_1,x_2,x_3)$ in $V(H)^3$, there is at least one $i \in [3]$ such that there is no $x_i$-excluder in $H$ for $(x_1,x_2,x_3)$.
We isolate a fact shown within the proof of Theorem 4.1 of \cite{HR11}.

 \begin{lemma}[\cite{HR11}]\label{lem:def3nu}
 Let $H$ be a graph containing no permutable triples.
 Let $\phi(x_1,x_2,x_3) = m$ if at least two of the entries are $m$, and otherwise
 let $\phi(x_1,x_2,x_3) = x_i$ where $i$ is the minimum index such that there is no $x_i$-excluder in $H$ for $(x_1,x_2,x_3)$.
 The function $\phi:H^3 \to H$ is a conservative $\kNU{3}$-polymorphism of $H$.
 \end{lemma}

 It is clear that the existence of excluders for a triple is preserved under any automorphism $s$ of $H$. The properties `two of $x_1, x_2$ and $x_3$ being the same', and `minimum index' are also preserved, so the conservative $\kNU{3}$-polymorphism defined in the lemma is symmetric.
 This allows us to prove Theorem \ref{thm:main}.
 \begin{proof}[Proof of Theorem \ref{thm:main}]
 Let $\redSub{\PH}$ have a conservative $\kNU{3}$-polymorphism $\phi$. By the discussion following Lemma \ref{lem:def3nu} we may assume that $\phi$ is symmetric, and so, as the switch map is an automorphism of $\redSub{\PH}$,
 switch-symmetric. By Lemma \ref{lem:ss}, $\phi$ is then a conservative $\kNU{3}$-polymorphism of $\PH$, and so a conservative $\WNU$-polymorphism.
 This completes the proof of the equivalence of the four polymorphisms in \eqref{eq:easydir}, and so proves Theorem \ref{thm:main}.
\end{proof}

 \section{Reduction to bipartite graphs and bipartite-min orderings} \label{sect:bipart}
 
 In \cite{FHH}, Feder, Hell, and Huang, reduced the complexity of the list homomorphism problem for graphs to the bipartite (so irreflexive) case by showing that $\lHom(H)$ for any graph $H$ is polynomially equivalent to $\lHom(\Bip{H})$ for a bipartite graph $\Bip{H}:= K_2 \times H$. Their result implies that $H$ has a conservative $\WNU$-polymorphism if and only if $\Bip{H}$ does. 
 Their proof consists of two parts. Where `parity-symmetric' polymorphisms on $\Bip{H}$ are defined below, the authors of \cite{FHH} essentially prove the following two equivalences. 
 
 \begin{enumerate}
 \item $H$ has conservative $\WNU$-polymorphism if and only if $\Bip{H}$ has a parity-symmetric conservative $\WNU$-polymorphism. 
 \item $\Bip{H}$ has a parity-symmetric conservative $\WNU$-polymorphism if and only if it has a conservative $\WNU$-polymorphism. 
 \end{enumerate}
 
 (To reconcile these statements with those of \cite{FHH}, the reader should observe that the graph $H^{**}$ of \cite{FHH} is simply $\Bip{H}$ augmented with a relation that forces all polymorphisms to be parity-symmetric.) 
 
 The first statement is easy. In this section we observe that it generalises and extends to similar statements about polymorphisms of $\br$-graphs, and in particular about polymorphisms of $\PH$. The second statement is much harder and depends on structural results.


 \subsection{The bipartite resolution and the constant parity subgraph} 
 
 Recall that for a graph $H$, the categorical product $K_2 \times H$ is the graph with vertex set $\{0,1\} \times V(H)$ in which $(i,h) \sim (i',h')$ if and only if $i \neq i'$ and $h \sim h'$. 
 
 Let $B$ be the purple $K_2$ on the vertex set $\{0,1\}$. For a $\br$-graph $H$, the {\em bipartite resolution} of $H$ is $\Bip{H}:= B \times H$. It is a bipartite $\br$-graph with partite sets $\pi_B^{-1}(0)$ and $\pi_{B}^{-1}(1)$,
 so we call $i = \pi_{B}((i,v))$ the {\em parity} of a vertex $(i,v)$ and call the map
 \begin{equation}\label{eq:pfmap}
\fp : (i,v) \mapsto (1-i,v), 
 \end{equation} 
 which is clearly an automorphism of $\Bip{H}$, the {\em parity flip} map. A polymorphism $\phi$ of $\Bip{H}$ is {\em parity-symmetric} if it commutes with the parity flip map. 
 
 Let $\cP$ be the set consisting of the following properties applying to polymorphisms of a graph $H$: 
 \begin{quote}
 idempotent, conservative, semi-conservative (if $H$ is a switch graph), $\kNU{k}$, $\WNU$.
 \end{quote}

 For any subset $P \subseteq \cP$, a polymorphism $\phi$ of $H$ is a $P$-polymorphism if it has all properties in $P$. The main result of this section is the following. 
 
 \begin{lemma}\label{lem:bipart}
 For any $\br$-graph $H$, and any subset $P$ of $\cP$, $H$ has a $P$-polymorphism $\phi$ if and only if $\Bip{H}$ has a parity-symmetric $P$-polymorphism $\Phi$ (called the bipartite resolution of $\phi$). 
 \end{lemma}
 \begin{proof}
 This is immediate from Claims \ref{cl:bip-ext} and \ref{cl:ext} given below.
 \end{proof}
 
 For a vertex $v = (v_1, \dots, v_k)$ in $\Bip{H}^k$, the {\em parity pattern} of $v$ is
 the tuple
 \[ \pi_B(v):= ( \pi_B(v_1), \dots \pi_B(v_k)) \]
 of its coordinates' parities. 
 Over components of $\Bip{H}$ the parity pattern is constant up to component-wise action of the parity flip map $\fp$. In particular the {\em constant parity subgraph} $\CPC{k}$ of $\Bip{H}^k$, which is the subgraph induced on the vertices with parity patterns $(0,0, \dots, 0)$ or $(1,1, \dots, 1)$, is a union of components. A coordinate of a parity pattern is {\em majority} if at least half of the coordinates have the same parity. 
 A homomorphism $\Phi: \CPC{k} \to \Bip{H}$ is a $P$-homomorphism, for any $P \subseteq \cP$, if it satisfies those same conditions required of a polymorphism of $\Bip{H}$ to satisfy them, and is parity-symmetric if it commutes with $\fp$.

 \begin{claim}\label{cl:bip-ext} 
 For any $\br$-graph $H$ and any $P \subseteq \cP$, $H$ has a $k$-ary $P$-polymorphism if and only if $\CPC{k}$ has a parity-symmetric $P$-polymorphism. 
 \end{claim}
 \begin{proof}
 The details of the proof are straightforward but tedious, and so we just outline the main points. 
 
 For a polymorphism $\phi: H^k \to H$, the map $\Phi: \CPC{k} \to \Bip{H}$ defined for $v = (v_1, \dots, v_k) \in V(\CPC{k})$ by 
 \[ \Phi(v) = (\pi_B(v_1), \phi(\pi_H(v_1), \dots \pi_H(v_k)) ) \]
 is a parity-symmetric homomorphism, and for a parity-symmetric homomorphism 
 $\Phi: \CPC{k} \to \Bip{H}$ the map $\phi:H^k \to H$ defined for $v = (v_1, \dots, v_k)$ by 
 \[ \phi( v ) = \Phi( (0,v_1), \dots, (0,v_k) ) \]
 is a polymorphism; moreover, these constructions are inverse. 
 
 That $\phi$ is $\WNU$ or $\kNU{k}$ if and only if $\Phi$ is also straightforward. 
 To verify that $\phi$ is idempotent, conservative, or semi-conservative if and only if $\Phi$ is one need only observe that $\Phi$ preserves a list $L \subset V(\Bip{H})$ if and only if $\phi$ preserves its intersections,  $\pi_H( L \cap \pi_B^{-1}(0))$ and $\pi_H( L \cap \pi_B^{-1}(1))$,    with each side of $\Bip{H}$.
 \end{proof}
 
 \begin{claim}\label{cl:ext} 
 For any $\br$-graph $H$ and any $P \subseteq \cP \cup \{ \mbox{parity-symmetric}\}$, any $P$-homomorphism $\CPC{k} \to \Bip{H}$ extends to a $P$-polymorphism of $\Bip{H}$.
 \end{claim}
 \begin{proof}
 Given a homomorphism $\Phi: \CPC{k} \to \Bip{H}$, extend it to a polymorphism of $\Bip{H}$ by defining it, on each component of $\CPC{k}$, to project onto some coordinate that is majority in the parity pattern of the component.
 
 That this maintains the properties of idempotence, conservativity, and semi-conservativity is immediate from the observation that projections preserve all lists. That parity-symmetry is maintained is immediate from the fact that projections are parity-symmetric. That the properties $\kNU{k}$ and $\WNU$ are maintained uses the choice of projection, but it straightforward. 
 \end{proof}
 
 Applying Lemma \ref{lem:bipart} to the $\br$-graph $\cS_H$ yields this useful observation. 
 \begin{proposition}\label{prop:bipWNU} 
   For $\br$-graph $H$, $\cS_H$ has a semi-conservative $\WNU$-polymorphism if and only if 
   $\Bip{\cS_H}$ has a parity-symmetric semi-conservative $\WNU$-polymorphism.  
 \end{proposition}
 
 We use this in conjunction with one more simple observation.
 
 \begin{fact}\label{fact:pbh}
 For any $\br$-graph $H$, $\Bip{\PH}$ and $\cS_{\Bip{H}}$ are isomorphic.
 \end{fact}
 \begin{proof}
 They are both the graph with vertex set $B \times V(H) \times S$ where 
 $(i,h,j) \sim (i',h',j')$ if and only if $i \neq i'$ and $h \sim h'$.
 The edge $(i,h,j)(i',h',j')$ is red if and only if $hh'$ is red and $j = j'$ or $hh' $ is blue and $j \neq j'$.
 \end{proof}
 
 \subsection{bipartite-min orderings}
 As mentioned in the introduction, a graph cannot have a min ordering unless almost all of its vertices have loops.  In \cite{FV98}bipartite-min orderings were considered for bipartite graphs.  
 \begin{definition}
 An ordering of the vertices of a bipartite graph $H$ with partite sets $U$ and $V$ is a {\em bipartite-min ordering} if for any $a,b \in U$ and $a',b' \in V$, the underbar property \eqref{eq:underbar} holds. 
 \end{definition}
 
 As a min-ordering $\leq$ of $H$ is a $2$-ary function $\min$ which defines a conservative $3$-ary $\WNU$-polymorphism $\min$ of $H$, (see before and after Definition \ref{def:min}) a bipartite-min ordering of $H$ is two functions, one on the partite set $U$ and one on $V$, that defines a conservative  $\WNU$-homomorphism from $\CPC{3}$ to $\Bip{H}$, so by Claim \ref{cl:ext} extends to a conservative  $\WNU$-polymorphism of $\Bip{H}$.  
 So like a min-ordering of $H$ yields a conservative $\WNU$-polymorphism on $H$, a bipartite-min ordering of $\Bip{H}$ yields a conservative $\WNU$-polymorphism on $H$. If this polymorphism of $\Bip{H}$ is parity-symmetric, if gives, by Lemma \ref{lem:bipart}, a conservative $\WNU$-polymorphism on $H$. 
   
 Viewing min orderings and bipartite-min orderings as functions, we get the following case of Lemma \ref{lem:bipart}.   
 
 \begin{fact}\label{fact:minbipmin}
   A graph $H$ has a min ordering if and only if $\Bip{H}$ has a parity-symmetric bipartite-min polymorphism.
 \end{fact}

 \section{Bipartite-min orderings and purple-red graphs}\label{sect:pr}

 As useful as min orderings are, one looks to define them for $\br$-graphs.  This was done for trees and some other classes of $\br$-graphs in \cite{BBFHJ}. The authors defined special min orderings of reflexive $\br$-trees, and special bipartite-min orderings of irreflexive $\br$-trees, and then showed that for $\br$-graphs $H$ with these orderings the problem $\lsHom{H}$ can be solved in polynomial time.  
 
 The polynomial times algorithms from \cite{BBFHJ} are complicated.  In this section we suggest an approach to these tractability proofs that is easier, but depends on the CSP-dichotomy.  
 
We consider $\pr$-graphs, and we will assume that they have been switched to remove all pure blue edges. We thus talk of red edges, by which we mean pure red edges, and purple edges. In the case that we want to consider a purple edge as red, say in the target of a homomorphism, we will say `red or purple' or `not necessarily pure red'. 

 The following definition is from \cite{BBFHJ} but simplified for $\pr$-graphs. 
 
\begin{definition}\label{def:special}
 Given a $\pr$-graph $H$ with a (bipartite-)min ordering of the underlying graph, a vertex is {\em special} if all its neighbours via purple edges come before  its other neighbours. The ordering is a {\em special (bipartite-)min ordering} if all vertices are special. 
\end{definition}

 Our main theorem says that the existence of a special bipartite-min ordering on $\Bip{H}$, for a reflexive $\pr$-graph $H$, yields a semi-conservative $\WNU$ on $\cS_H$, so puts $\lsHom(H)$ in $P$.

 A {\em red component} $R$ of a $\pr$-graph $H$ is the subgraph induced by a set of vertices that are connected with red edges. We note that a red component may induce purple edges, but is connected via red edges. 
 A neighbour of a vertex is called {\em purple} or {\em red} if it is adjacent via a purple or red edge.

 \begin{lemma}\label{lemma:rededgeup}
 Let $H$ be a bipartite $\pr$-graph having a special bipartite-min ordering, and let $R$ be a red component of $H$ with maximum vertices $u_a$ and $u_b$. Let $r_ar_b$ be a red edge in $R$, and let $x_ax_b$ be an edge with 
 $r_a \leq x_a \leq u_a$ and $r_b \leq x_b$. Then $x_ax_b$ is red, and $x_a$ and $x_b$ are in $R$. 
 \end{lemma}
 \begin{proof}
 First we prove the lemma under the assumption that $x_b \leq u_b$. 
 As $R$ is a red component there is a red path from $r_a$ to $u_a$, and somewhere it must cross $x_ax_b$ so we may assume (by switching the roles of $a$ and $b$ if necessary) we have a red path $aba'$ with $a \leq x_a < a'$ and $b \leq x_b$. By the underbar property we have $x_a \sim b$.
 As $b$ is special $x_ab$ must be red, and as $x_a$ is special $x_ax_b$ must be red, and in $R$. 
 
 Now assume that $u_b < x_b$. If $x_a$ is in $R$ then it has some red neighbour below $x_b$ and so $x_ax_b$ is red by specialty of $x_a$. 
 If $x_a$ is not in $R$ there is a red edge $u_ay_b$ with $y_b \leq x_b$ which says that $x_a \sim y_b$ by the underbar property.
 So $x_a$ is in $R$ by the previous case of the lemma. 
 \end{proof}
 
 This shows that if there is a purple edge $x_ax_b$ above a red edge $r_ar_b$, then it must be above the whole red component $R$ containing $r_a$ and $r_b$. 
 
 

 

 We are now ready to prove the main result of the section. 
 
 \begin{theorem}\label{thm:specbmo} 
 Let $\Bip{H}$ be a bipartite $\pr$-graph with a special bipartite-min ordering $\leq$. The switch graph $\cS:=\cS_{\Bip{H}}$ has a 
 semi-conservative $\WNU$-polymorphism $\Phi$.
 Moreover, if $\leq$ is parity-symmetric, then so is $\Phi$. 
 \end{theorem}
 \begin{proof}
 For a vertex $v$ of $\Bip{H}$ let $\rc(v)$ be the {\em red component} containing $v$.
 By Claim \ref{cl:bip-ext} it is enough to define $\Phi: {\cS}^3 \to \cS$ on constant parity tuples. For such tuples we set
 \[ \Phi( (a_1,x_1), (a_2,x_2), (a_3,x_3) ) = ( A, X )\]
 where $A = \min(a_1,a_2,a_3)$ for the bipartite-min polymorphism $\min$ defined by $\leq$, and $X$ is defined as follows.
 Calling $(a_i,x_i)$ or simply $a_i$ {\em relevant} if $\rc( a_i ) = \rc(A)$, let
 \begin{itemize}
 \item $X = \maj(x_1,x_2,x_3)$ if all $a_i$ are relevant; otherwise let
 \item $X = x_i$ where $i$ is the minimum $i$ such that $a_i$ is relevant. 
 \end{itemize}

 The function $\Phi$ is clearly idempotent, and semi-conservative. To see that it is $\WNU$, it is enough to show that $A$ and $X$ are both $\WNU$. Certainly $A$ is, to see that $X$ is too, we assume that there is a repeated value in $\{a_1, a_2, a_3\}$ and observe that the value of $X$ is chosen from among $\{x_1,x_2,x_3\}$ depending on which of the $a_i$ is repeated, and is unchanged under permuting the indices $i = 1,2,3$. Indeed, if the repeated $a_i$ is not relevant then $X$ is the other $x_i$.
 If the repeated $a_i$ is relevant, while the other is not, $X$ is the repeated $x_i$. If all are relevant then $X$ is majority, and so is the repeated $x_i$. In all cases, $X$ is $\WNU$.
 Observe also that $\Phi$ is switch-symmetric, as complementing the $x_i$ does not effect $A$, and so does not effect which $x_i$ is returned by $X$. 
 
 What is left to be shown, for the first statement of the theorem, is that $\Phi$ is a homomorphism. As it is switch-symmetric, by Lemma \ref{lem:ss} it is enough to show that it preserves (not necessarily pure) red edges. 
 Assume that $(a_i,x_i) \sim (b_i,y_i)$ is a red (or purple) edge for $i=1,2,3$. 
 We show that 
 \[ (A,X) := \Phi( (a_1,x_1), (a_2,x_2), (a_3, x_3) ) \sim \Phi( (b_1,y_1), (b_2,y_2), (b_3, y_3) ) =: (B,Y), \]
 is a red edge (or purple) edge.

 Indeed, as $\min$ is a polymorphism of the underlying graph $\Bip{H}^+\iso \redSub{\Bip{H}}$ of $\Bip{H}$, we have
 \[ A = \min(a_1,a_2,a_3) \sim \min(b_1,b_2,b_3) = B\]
 in $\redSub{\Bip{H}}$, so $AB$ is a red (or purple) edge of $\Bip{H}$. If it is purple then whatever $X$ and $Y$ are, we have that $(A,X)(B,Y)$ is a red (or purple) edge, and we are done. So we must show that if $AB$ is pure red then $X = Y$. 
 
Assume that $AB$ is pure red. First observe, for all $i$, that if $a_i$ is relevant to $A$, then $a_ib_i$ is pure red, making $x_i = y_i$, and $b_i$ is relevant to $B$. 
 Indeed, if $a_i$ is relevant we have $A \leq a_i$ and $a_i$ is in $\rc(A)$. Any purple neighbour of $a_i$ must be below $B$ by the fact that $a_i$ is special and by Lemma \ref{lemma:rededgeup}, so $a_ib_i$ must be a pure red edge, making $b_i$ relevant to $B$. Similarly $a_i$ is relevant if $b_i$ is, and this holds for all relevant arguments, so with the fact that $x_i = y_i$ for these arguments, we get $X = Y$.

 For the second statement of the theorem, we have that $A$ is parity-symmetric by definition, as $\leq$ is parity-symmetric, so it is enough to observe that $X$ is unchanged by replacing the $a_i$ with $\ps(a_i)$. As $A$ is parity-symmetric, doing so does not change which $a_i$ are relevant, so this is immediate from the definition of $X$.
 \end{proof}
 
 We finish off this section by showing how our theorem yields Conjecture \ref{conj:main2} for reflexive $\pr$-graphs. 
 \begin{theorem}\label{thm:Main}
 For a reflexive $\pr$-graph $H$, the following are equivalent.
 \begin{enumerate}
 \item $H$ has no invertible pairs or chains.
 \item[(ii*)] $\Bip{H}$ has a parity-symmetric special bipartite-min ordering.
 \item[(iii)] $\PH$ has a semi-conservative $\WNU$-polymorphism. 
 \end{enumerate}
 \end{theorem}
 \begin{proof}
 In \cite{BBHJR} it is shown that if $H$ have no invertible pairs or chains, then $H$ has a special min ordering.  By Fact  \ref{fact:minbipmin}, a min ordering of $H$ is exactly a bipartite-min ordering of $\Bip{H}$, and by definition, the first is special if and only if the second is, so we have the implication $(i) \Rightarrow (ii^*)$.   
 
 If $\Bip{H}$ has a parity-symmetric special bipartite-min ordering,  Theorem \ref{thm:specbmo} gives 
 a parity-symmetric semi-conservative $\WNU$-polymorphism of $\cS_{\Bip{H}}$.
 This is $\Bip{\cS_H}$ by Fact \ref{fact:pbh}, and so by Proposition \ref{prop:bipWNU}, $\cS_H$ has a semi-conservative $\WNU$-polymorphism. 
 This gives us $(ii^*) \Rightarrow (iii)$.
 
 Implication $(iii) \Rightarrow (i)$ also uses known results.  Assume that 
 $\PH$ has a has a semi-conservative $\WNU$-polymorphism. Then the problem $\lsHom(H)$ is in $\P$ by Fact \ref{fact:WNU}, and in particular it is in $\P$ when restricting to red instances.
 As $H$ the underlying graph of $H$ is exactly the red subgraph $\redSub{H}$, switching does not help for red instances, and so $\lsHom(H)$ for red instances is just $\lHom(\redSub{H})$, putting $\lHom(\redSub{H})$ in $\P$. Thus $\redSub{H}$ has no invertible pairs by Theorem \ref{thm:min-order-irr}. By definition, an invertible pair of $H$ is an invertible pair of the underlying graph, which in this case is $\redSub{H}$, and so $H$ has no invertible pairs. 
 In \cite{BBFHJ} it is shown that chains in $H$ omit semi-conservative $\WNU$-polymorphisms on $\PH$, so it has no chains either. 
 \end{proof}

 \section{Possible and impossible polymorphism collapses}\label{sect:counter}


 The following questions arise naturally when considering the complexity dichotomy for $\lsHom(H)$, and when observing the various polymorphism collapses we have mentioned in the paper. 
 
\begin{question}\label{questions}
Is it true for a $\br$-graph $H$ that 
\begin{enumerate}
 \item $\PH$ has a semi-conservative $\WNU$ if and only if it has a parity-symmetric one?
 \item $\PH$ has a semi-conservative $\WNU$-polymorphism if and only if it has a conservative $\WNU$-polymorphism?
 \item $\PH$ has a semi-conservative $\WNU$-polymorphism if and only if it has a semi-conservative $\kNU{3}$-polymorphism?
\end{enumerate}
\end{question}

Part (i) is certainly tempting. Using the results and ideas of Section \ref{sect:bipart} it is the key to getting a full reduction to the bipartite case:
 $\lsHom(H) \peq \lsHom(\Bip{H})$. 
Part (ii) was tempting, reducing the whole paper to Section \ref{sect:br}, and (iii) also, as it would reduce the paper to Sections \ref{sect:br} and \ref{sect:bipart}. We now have countless counter-examples giving negative answers to these last two questions. We give one in the proposition below.

 \begin{figure}
 \begin{center}
 \begin{tikzpicture}[every node/.style={blackvertex}, scale=.6]
 \Verts[]{a0/0/0, a1/0/1, a2/0/2, a3/0/3, a4/0/4, b4/-1/4.5, b5/-1/5.5}
 \Edges[red, thick, dashed]{a0/a1, a1/a2, a2/a3, a3/a4}{bend left = 20}
 \Edges[blue, thick ]{a0/a1, a1/a2, a2/a3, a3/a4}{bend right = 20}
 \Edges[red, thick, dashed]{a3/b4,b4/b5}{}

 \begin{scope}[xshift = 5cm]
 
 \Verts[]{a0/0/0, a1/0/1, a2/0/2, a3/-1/3, a3'/1/3, a4/0/3, b4/-1/4.5, b5/-1/5.5, b4'/1/4.5, b5'/1/5.5}
 \Edges[red, thick, dashed]{a0/a1, a1/a2, a2/a3, a3/a4, a3'/a2, a4/a3'}{bend left = 20}
 \Edges[blue, thick ]{a0/a1, a1/a2, a2/a3, a3/a4, a3'/a2, a4/a3'}{bend right = 20}
 \Edges[red, thick, dashed]{a3/b4,b4/b5,a3'/b4',b4'/b5'}{}
 \Edges[blue, thick]{a3/b4',b4/b5',a3'/b4,b4'/b5}{}
 \end{scope}

 \begin{scope}[xshift = 10cm]
 
 \Verts[]{a0/0/0, a1/0/1, a2/0/2, a3/-1/3, a3'/1/3, a4/0/4, b4/-1/4.5, b5/-1/5.5, b4'/1/4.5, b5'/1/5.5}
 \Edges[red, thick, dashed]{a0/a1, a1/a2, a2/a3, a3/a4, a3'/a2, a4/a3'}{}
 \Edges[red, thick, dashed]{a3/b4,b4/b5,a3'/b4',b4'/b5'}{}
 \Vlabels{a_2/left/a0, a_1/left/a1, a_0/left/a2, b_0/left/a3, b_1/left/b4, b_2/left/b5, c_0/right/a3', c_1/right/b4', c_2/right/b5', d/above/a4} 
 \end{scope}
 \end{tikzpicture}
 \end{center}
 \caption{The tree $T$, the switch-core of $\cP_{T}$, and its red subgraph $R$}\label{fig:counter}
 \end{figure}
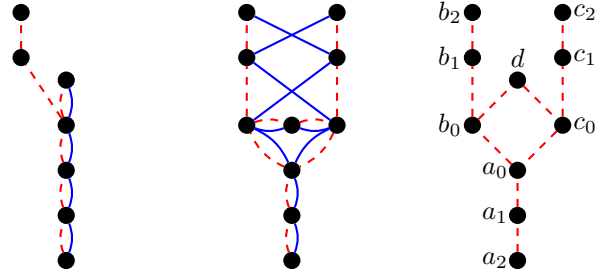


\begin{proposition}
 Where $T$ is the $\br$-graph on the left of Figure \ref{fig:counter}, $\cP_T$ has a semi-conservative $\WNU$-polymorphism but no conservative $\WNU$-polymorphism or semi-conservative $\kNU{3}$-polymorphism.
 \end{proposition}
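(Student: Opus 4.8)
The three assertions will be proved separately. Throughout, it is convenient to replace $\cP_T$ by its switch-core $S$ — the middle graph of Figure \ref{fig:counter} — since $S$ is an induced sub-$\br$-graph of $\cP_T$ that is closed under the switch map $s$, and restriction of polymorphisms shows that $\cP_T$ has a (semi-)conservative $\WNU$- or $\kNU{3}$-polymorphism if and only if $S$ does.

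\medskip
\noindent\textit{Semi-conservative $\WNU$.} The plan is simply to exhibit $T$ as an instance of Theorem \ref{thm:newmain}. Let $H_+$ be the purple spider obtained from $T$ by recolouring the edges $\{a_3,b_4\}$ and $\{b_4,b_5\}$ purple. Then $H_+$ is a tree, and a direct check of the under-bar identity (restricted to constant-parity pairs as in Fact \ref{fact:BiPoly}) shows that $a_0<a_1<a_2<a_4<b_4<a_3<b_5$ is a min-ordering of $H_+$. Its three largest vertices are $b_4,a_3,b_5$, so taking $r=a_4$ and $L=\varnothing$, clause (i) of Theorem \ref{thm:newmain} singles out exactly the blue edges of $H_+$ lying inside $\{b_4,a_3,b_5\}$, namely $\{a_3,b_4\}$ and $\{b_4,b_5\}$; and these are precisely the blue edges deleted to pass from $H_+$ to $T$. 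Hence Theorem \ref{thm:newmain} produces a semi-conservative $3$-ary $\WNU$-polymorphism of $\cP_T$. (A reordering via Lemma \ref{lemma:reorder} is not even needed here.)

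\medskip
\noindent\textit{No conservative $\WNU$.} By Corollary \ref{cor:main} it suffices to show that $\redSub{\cP_T}$ contains a permutable triple. The red subgraph of $S$ is the graph $R$ on the right of Figure \ref{fig:counter}: the $4$-cycle $a_0b_0dc_0$ together with the pendant paths $a_0a_1a_2$, $b_0b_1b_2$, $c_0c_1c_2$. Since $R$ is an induced subgraph of $\redSub{\cP_T}$, it is enough to produce a permutable triple in $R$, and I claim $(b_0,c_0,a_1)$ is one. An $a_1$-excluder is $B_{b_0}=(b_0,d,c_0,c_1)$, $B_{c_0}=(c_0,c_1,c_0,c_1)$, $B_{a_1}=(a_1,a_2,a_1,a_0)$ (the first two end at $c_1$, and $B_{a_1}$ avoids both); a $b_0$-excluder is $B_{b_0}=(b_0,b_1,b_2,b_1)$, $B_{c_0}=(c_0,c_1,c_0,a_0)$, $B_{a_1}=(a_1,a_2,a_1,a_0)$ (the last two end at $a_0$, and $B_{b_0}$ avoids both); and a $c_0$-excluder is obtained by applying the automorphism $b_i\leftrightarrow c_i$ of $R$ to the $b_0$-excluder. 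Each excluder is verified by the routine check that the indicated non-adjacencies hold in $R$. A permutable triple of the induced subgraph $R$ is a permutable triple of $\redSub{\cP_T}$, so Corollary \ref{cor:main} shows $\cP_T$ has no conservative $\WNU$-polymorphism.

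\medskip
\noindent\textit{No semi-conservative $\kNU{3}$.} This is the delicate part. Suppose for contradiction that $S$ admits a semi-conservative $\kNU{3}$-polymorphism $\phi$, so that $\phi(x,y,z)\in\{x,y,z,s(x),s(y),s(z)\}$, where $s$ fixes $a_0,a_1,a_2,d$ and swaps $b_i\leftrightarrow c_i$. Since $\{b_0,c_0,a_1\}$ is $s$-invariant, $\phi(b_0,c_0,a_1)\in\{b_0,c_0,a_1\}$. The plan is to run the three excluder walks of the previous paragraph for $(b_0,c_0,a_1)$, applying $\phi$ coordinatewise, and to track step by step the possible value of $\phi$ on each tuple that arises, using two facts: (1) many of these tuples — e.g. $(b_1,c_1,a_2)$, $(b_0,c_0,a_1)$, and tuples of the form $(e,e,\cdot)$ forced by $\kNU{3}$ — have $s$-invariant support, so there semi-conservativity coincides with conservativity and the classical ``image walk stays on the avoiding walk'' argument applies verbatim; and (2) at the remaining tuples one must invoke that $b_0,c_0$ (respectively $b_1,c_1$, respectively $b_2,c_2$) are twins in the underlying graph of $S$ but are distinguished by edge colour, so that $\phi$ — being simultaneously a polymorphism of $\redSub S$ and of $\blueSub S$ — cannot pass from one to the other arbitrarily. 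Using the $a_1$-excluder one gets $\phi(b_0,c_0,a_1)\ne a_1$; the $b_0$- and $c_0$-excluders, combined with the colour information and with the mild constraints already extracted, should then exclude $b_0$ and $c_0$ as well, contradicting $\phi(b_0,c_0,a_1)\in\{b_0,c_0,a_1\}$. The main obstacle, and the bulk of the write-up, is organising this finite but intricate case analysis so that the colours genuinely kill every $s$-shifted possibility along the walks; in particular it is here that the argument must go beyond the purely graph-theoretic excluder machinery, which alone only yields the analogue for conservative polymorphisms.
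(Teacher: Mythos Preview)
Your first two parts are correct. For the semi-conservative $\WNU$, your min-ordering agrees with the paper's within each bipartition class (which is all that matters, by Fact \ref{fact:BiPoly}), and the choice $r=a_4$, $L=\varnothing$ correctly singles out the two removed blue edges. For the absence of a conservative $\WNU$, the paper exhibits the permutable triple $(a_2,b_1,c_1)$ rather than your $(b_0,c_0,a_1)$; your excluders check out, so either triple works via Corollary \ref{cor:main}.

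The third part, however, is not a proof. You outline a plan---run the excluder walks coordinatewise and invoke colour constraints at the non-$s$-invariant tuples---and you concede that organising the resulting case analysis is ``the bulk of the write-up'' still to be done. This is a genuine gap, and your instinct that blue edges will be needed to close it is actually a wrong turn. The paper's argument lives entirely in the red graph $R$ and takes only a few lines, but it starts from $(a_2,b_1,c_1)$ rather than $(b_0,c_0,a_1)$. The walks it uses visit, in order, the tuples $(a_1,b_0,c_0)$, $(a_2,d,d)$, $(a_1,b_2,c_2)$, $(a_0,b_1,c_1)$, $(c_0,b_2,c_2)$, $(c_1,b_1,c_1)$. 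All but one of these are $s$-invariant, so semi-conservativity pins $\phi$ down exactly as conservativity would; at the single exception $(c_0,b_2,c_2)$ the adjacency constraint (must be a red neighbour of $b_1$) intersected with the four-element $s$-closure still leaves only $\{b_0,b_2\}$, and either value then forces $\phi(c_1,b_1,c_1)=b_1$, contradicting $\kNU{3}$. The ``intricate case analysis'' you anticipate evaporates once the right starting triple is chosen, and no blue edges enter the argument at all. With your triple $(b_0,c_0,a_1)$ and your walks, by contrast, one quickly meets tuples such as $(b_2,c_0,a_1)$ where semi-conservativity alone leaves a genuine ambiguity that red adjacencies do not immediately resolve---which is precisely why your sketch stalls.
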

 \begin{proof}
 That $\cP_T$ has a semi-conservative $\WNU$-polymorphism is shown in \cite{BBFHJ}, and also follows from Theorem \ref{thm:specbmo}. Indeed, ordering the vertices of $T$ according to their position in the figure as one moves up the
 page; except putting the top vertex in the purple path below its neighbour, we have a special bipartite-min ordering of $T$. 

 It is not too hard to verify that the triple $(a_2,b_1,c_1)$ in $R$ is a permutable triple, showing that it and so $\redSub{\cP_{T}}$ (of which it is an induced subgraph) can have no conservative $\WNU$-polymorphism.
 
 We show now that $R$ can have no semi-conservative $\kNU{3}$-polymorphisms. Note that now symmetric lists are those for which if $b_i$ is in a list then $c_i$ is also, and vice-versa. 
 
Assume, towards contradiction, that $\phi$ is a semi-conservative $\kNU{3}$-polymorphism of $R$. There is no valid image for $\phi(a_2,b_1,c_1)$. Indeed, if $\phi(a_2, b_1, c_1) = a_2$, then $\phi(a_1, b_0, c_0) = a_1$, and $\phi(a_2,d,d) = a_2$. This contradicts the fact that $\phi$ is a $\kNU{3}$. So without loss of generality we may assume that $\phi(a_2, b_1, c_1) = b_1$, Then $\phi(a_1,b_2,c_2) = b_2$, $\phi(a_0,b_1,c_1) = b_1$, $\phi(c_0,b_2,c_2) \in \{b_0, b_2\}$ and $\phi(c_1,b_1,c_1) = b_1$, again contradicting the fact that $\phi$ is a $\kNU{3}$. 
\end{proof}

 In \cite{BBFHJ} the authors observe that there are infinitely many trees having semi-conservative $\WNU$-operations that contain $T$ as an induced subgraph. The proof works for all of these.

 \section*{Acknowledgements}
 
 We thank the authors of \cite{BBFHJ} for early copies of their manuscript, and Pavol Hell in particular for generously directing us towards key ideas on several occasions. We thank an anonymous reviewer for a very detailed reading which revealed essential mistakes in an earlier version of the paper.

 \providecommand{\bysame}{\leavevmode\hboxto3em{\hrulefill}\thinspace}
 \newcommand{\doi}[1]{\href{http://dx.doi.org/#1}{\small\nolinkurl{DOI: #1}}}

\end{document}